\newtheorem{theorem}{Theorem}[section]
\newtheorem{lemma}[theorem]{Lemma}
\newtheorem{corollary}[theorem]{Corollary}
\newenvironment{proof}{    
  \noindent
  \textbf{Proof.}}{
  \hfill $\Box$
  \vspace{3mm}
}
\numberwithin{equation}{section}
\newcommand{\N}{\mathbb{N}} 
\newcommand{\R}{\mathbb{R}} 
\newcommand{\C}{\mathbb{C}} 
\newcommand{\D}{\mathbb{D}} 
\begin{document}

\title{Every separable complex Fr\'echet space with a continuous norm is isomorphic to a space of holomorphic functions}

\author{Jos\'{e} Bonet  }

\date{}

\maketitle

\begin{abstract}
Extending a result of Mashreghi and Ransford, we prove that every complex separable infinite dimensional Fr\'echet space with a continuous norm is isomorphic to a space continuously included in a space of holomorphic functions on the unit disc or  the complex plane, which contains the polynomials as a dense subspace. As a consequence examples of nuclear Fr\'echet spaces of holomorphic functions without the bounded approximation exist.
\end{abstract}

\renewcommand{\thefootnote}{}
\footnotetext{\emph{2020 Mathematics Subject Classification.}
Primary: 46A04, secondary: 46A11, 46A32.}%
\footnotetext{\emph{Key words and phrases.} Spaces of holomorphic functions, Fr\'echet spaces, continuous norm, bounded approximation property}


\section{Introduction.}

Let $G$ be an open connected domain in the complex plane $\C$. We denote by $H(G)$ the Fr\'echet space of holomorphic functions on $G$, endowed with the topology of uniform convergence on compact subsets of $G$. A \textit{Fr\'echet space $E$ of holomorphic functions on the domain $G$} is a Fr\'echet space that is a subset of $H(G)$, such that the inclusion map $E \subset H(G)$ is continuous and $E$ contains the polynomials. By the closed graph theorem, the inclusion map $E \subset H(G)$ is continuous if and only if the point evaluations at the points of $G$ are continuous on $E$. If the polynomials are dense in $E$, then $E$ is separable. The set $\mathcal{P}$ of polynomials is dense in $H(G)$ if and only if $G$ is the whole complex plane or a simply connected domain; see \cite[Theorem 13.11]{Ru}. However, this assumption is not needed in our results below. Banach spaces of holomorphic functions on the unit disc $\D$ and on the complex plane $\C$ have been thoroughly investigated. We refer the reader for example to the books \cite{HKZ}, \cite{Z} and \cite{Z1}. H\"ormander Fr\'echet algebras of entire functions \cite{BG}, \cite{BLV}, \cite{MT}, the Fourier-Laplace transform of spaces of (ultra)-distributions \cite{BMT} and intersections of growth Banach spaces \cite{BLT}, \cite{HKZ} are natural examples of Fr\'echet spaces  of holomorphic functions. Vogt \cite{V13} proved that there are Fr\'echet spaces $E$ which are contained in $H(G)$ such that the inclusion $E \subset H(G)$ is not continuous. These examples are not Fr\'echet spaces of holomorphic functions on $G$ in our sense. Very recently, Mashreghi and Ransford \cite[Theorem 1.3]{MR} have shown that every separable, infinite-dimensional, complex Banach space $Y$ is isometrically isomorphic to a Banach space of holomorphic functions $X \subset H(\D)$ such that the polynomials are dense in $X$. The purpose of this note is to extend this result to the setting of Fr\'echet spaces and to derive a few consequences.

Our notation for functional analysis, in particular for Fr\'echet spaces and their duals, is standard. We refer the reader to \cite{J}, \cite{K}, \cite{MV} and \cite{PCB}. If $E$ is a Fr\'echet space, its topological dual is denoted by $E'$. The weak topology on $E$ is denoted by $\sigma(E,E')$ and the weak* topology on $E'$ by $\sigma(E',E)$. The linear span of a subset $A$ of $E$ is denoted by ${\rm span}(A)$. In what follows, we set $\N_0 := \N \cup \{ 0 \}$.

\section{Results.}

We start with the following observation. \textit{If a Fr\'echet space $F$ is isomorphic to a Fr\'echet space $E \subset H(G)$ of holomorphic functions on an open connected domain $G \subset \C$, then $F$ has a continuous norm.} Indeed, let $T: F \rightarrow E$ be a topological isomorphism and let $K \subset G$ be an infinite compact set. Since both $T$ and the inclusion $E \subset H(G)$ are continuous by assumption, there is a seminorm $p$ on $F$ such that $\sup_{z \in K} |T(x)(z)| \leq p(x)$ for each $x \in F$. If $p(x) = 0$ for some $x \in F$, then $T(x)(z)=0$ for each $z \in K$. Therefore, the holomorphic function $T(x)$ vanishes on $G$. Since $T$ is injective,  $x=0$. \\

We need the following Lemma to prove our main result.

\begin{lemma}\label{lemma_basic}
Let $F$ be a separable, infinite dimensional Fr\'echet space with a continuous norm. Then one can find two sequences $(e_n)_{n \in \N_0} \subset F$ and $(e'_n)_{n \in \N_0} \subset F'$ such that
\begin{itemize}
\item[(i)] ${\rm span}\{ e_n \ | \ n \in \N_0 \}$ is dense in $F$.
\item[(ii)] $(e'_n)_{n \in \N_0}$ is equicontinuous in $F'$.
\item[(iii)] ${\rm span}\{ e'_n \ | \ n \in \N_0 \}$ is dense in $(F',\sigma(F'F))$.
\item[(iv)] $\langle e_n, e'_m \rangle = 0$ if $n \neq m$ and $\langle e_n, e'_n \rangle = 1$ for each $n \in \N_0$. That is, $(e_n, e'_n)_{n \in \N_0}$ is a biorthogonal system.
\end{itemize}
\end{lemma}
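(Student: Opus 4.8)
The plan is to build the biorthogonal system inductively, alternating between adding a new vector to make the $e_n$ total and adding a new functional to make the $e'_n$ weak* total, while at each stage correcting the new element against the finitely many already-chosen ones so that biorthogonality is preserved. Since $F$ is separable, fix a dense sequence $(x_k)_{k\in\N}$ in $F$; since $F$ is separable and metrizable, its dual $F'$ with the weak* topology is separable on equicontinuous sets, so fix an equicontinuous sequence $(y_k')_{k\in\N}$ in $F'$ whose span is weak* dense in $F'$ (such a sequence exists: take a countable fundamental sequence of seminorms $(p_j)$, and for each $j$ the polar $U_j^\circ$ is weak* compact metrizable, hence weak* separable, so pick a weak* dense sequence in each $U_j^\circ$ and concatenate). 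The continuous norm on $F$ will be used to guarantee that the correction steps can actually be carried out, i.e. that we never get stuck because a candidate vector lies in the span of the previous ones or a candidate functional already vanishes where it shouldn't.

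More precisely, I would carry out the construction as follows. Start by choosing any $e_0 \neq 0$ and, using Hahn--Banach and the fact that $F$ has a continuous norm (so that a genuinely norming functional exists), pick $e_0' \in F'$ with $\langle e_0, e_0'\rangle = 1$; by scaling we may assume $e_0' \in U^\circ$ for a fixed seminorm $U$ defining a continuous norm, which is what will make the whole sequence $(e_n')$ equicontinuous. Now suppose $(e_i,e_i')_{i\le n}$ has been constructed as a biorthogonal system. If $n$ is even, we perform a ``vector step'': let $k$ be minimal with $x_k \notin {\rm span}\{e_0,\dots,e_n\}$ (if no such $k$ exists the span is already dense and we may insert an arbitrary vector outside the finite-dimensional span, which exists since $F$ is infinite dimensional), set $\tilde e_{n+1} := x_k - \sum_{i=0}^n \langle x_k, e_i'\rangle e_i$ so that $\langle \tilde e_{n+1}, e_i'\rangle = 0$ for $i\le n$ and $\tilde e_{n+1}\neq 0$; then choose $e_{n+1}' \in F'$ with $\langle \tilde e_{n+1}, e_{n+1}'\rangle = 1$ and, after replacing $e_{n+1}'$ by $e_{n+1}' - \sum_{i=0}^n \langle e_i, e_{n+1}'\rangle e_i' $ is \emph{not} what we want --- rather we just need $e_{n+1}'$ to vanish on $e_0,\dots,e_n$, which we arrange by applying Hahn--Banach on the finite-codimensional subspace $\{e_0,\dots,e_n\}^{\perp}$, on which $\tilde e_{n+1}$ is a nonzero vector; finally set $e_{n+1} := \tilde e_{n+1}$. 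If $n$ is odd, we perform a ``functional step'': let $k$ be minimal with $y_k' \notin {\rm span}\{e_0',\dots,e_n'\}$ (weak* closure; if none exists, insert an arbitrary functional outside the finite span), and correct it to $\tilde e_{n+1}' := y_k' - \sum_{i=0}^n \langle e_i, y_k'\rangle e_i'$ so that $\langle e_i, \tilde e_{n+1}'\rangle = 0$ for $i\le n$; since $\tilde e_{n+1}'\neq 0$, pick $e_{n+1}$ with $\langle e_{n+1}, \tilde e_{n+1}'\rangle = 1$ and then correct $e_{n+1}$ by subtracting $\sum_{i=0}^n \langle e_{n+1}, e_i'\rangle e_i$ so that it is biorthogonal to the old functionals; set $e_{n+1}' := \tilde e_{n+1}'$. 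In every step one checks the four biorthogonality relations involving the new index hold by construction.

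It remains to argue (i)--(iii) for the resulting system and to control equicontinuity in (ii). For (i): after every vector step the span of the $e_i$ contains $x_k$ for the relevant $k$; since the corrections only add elements of the previous span, an easy induction shows ${\rm span}\{e_0,\dots,e_m\} \supseteq {\rm span}\{x_1,\dots,x_j\}$ for $j\to\infty$ as $m\to\infty$, so the span is dense. Statement (iii) is symmetric, using weak* density of ${\rm span}(y_k')$. For (ii), the delicate point: the functionals $e_{n+1}'$ produced in vector steps come from Hahn--Banach and a priori are not uniformly bounded. Here I would be more careful in the vector step: instead of an arbitrary norming functional, extend a functional of controlled norm --- on the subspace spanned by $\tilde e_{n+1}$ inside $\{e_0,\dots,e_n\}^\perp$ define it to have value $1$ at $\tilde e_{n+1}$ with norm equal to (a fixed multiple of) $1/q(\tilde e_{n+1})$ for a fixed continuous norm $q$, and extend by Hahn--Banach preserving the $q$-norm; rescaling so that all $e_n'$ lie in a common polar $V^\circ$ for a fixed $0$-neighborhood $V$ is then possible provided we also rescale $e_{n+1}$ inversely --- but that rescaling breaks $\langle e_n,e_n'\rangle = 1$, so instead one keeps the pairing $=1$ and simply notes that $\{e_n' : n\in\N_0\}$ need only be equicontinuous as a \emph{set}, which follows because each $e_n'$ is dominated by a fixed continuous seminorm times a constant depending on $n$; to kill the dependence on $n$ one chooses, at stage $n+1$, the representative $x_k$ (resp. the scaling of $\tilde e_{n+1}$) so that $q(\tilde e_{n+1}) \ge 1$, which is harmless since the span and density are unaffected by scaling the $e_n$. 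Thus the main obstacle, and the place where the continuous norm hypothesis really enters, is exactly this simultaneous control: ensuring biorthogonality, density on both sides, \emph{and} a single equicontinuity bound for the whole sequence of functionals; I would resolve it by the normalization $q(e_n)\ge 1$ together with Hahn--Banach norm-preserving extensions relative to the continuous norm $q$, so that every $e_n'$ lies in the polar of the unit ball of $q$ up to a universal constant.
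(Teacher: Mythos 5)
Your alternating Gram--Schmidt construction is essentially the content of \cite[Proposition 2.3.2]{PCB}, which the paper simply cites, and your treatment of (i), (iii), (iv) and of the vector-step functionals is recoverable. The genuine gap is in (ii), and it sits in the \emph{functional} steps rather than the vector steps you spend your effort on. Your claim that concatenating weak*-dense sequences of the polars $U_j^\circ$ over all $j$ yields an \emph{equicontinuous} weak*-total sequence $(y_k')$ is false: $\bigcup_j U_j^\circ$ is contained in a single polar (up to a constant) only when $F$ is normable. Consequently the functionals $\tilde e_{n+1}' = y_k' - \sum_{i\le n}\langle e_i, y_k'\rangle e_i'$ produced in functional steps inherit from $y_k'$ a dependence on $p_j$ with $j$ unbounded. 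This cannot be repaired by the rescalings you propose at the end: an individual $y_k'\in U_j^\circ$ need not be continuous for the fixed norm $q$ at all (in $H(\D)$ with $q=\sup_{|z|\le 1/2}|\cdot|$, the point evaluation at $3/4$ is dominated by no multiple of $q$), and multiplying a functional by a scalar does not change which seminorms dominate it. So the family $(e_n')$ you construct is, in general, dominated by no single continuous seminorm, and (ii) fails.

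The missing idea --- which is exactly where the continuous norm enters in the paper's proof --- is to choose the totalizing sequence $(y_k')$ inside $H:=\mathrm{span}(U^\circ)=\bigcup_{s\in\N} sU^\circ$, the dual of the \emph{normed} space $(F,q)$, where $U$ is the $q$-unit ball. Since $q$ is a norm, $U^\circ$ separates the points of $F$, so $H$ is already $\sigma(F',F)$-dense in $F'$ by Hahn--Banach; hence it suffices to make $\mathrm{span}\{e_n'\}$ dense in $(H,\sigma(H,F))$, and $H$ is $\sigma(H,F)$-separable because $(F,q)$ is a separable metrizable space. With the $y_k'$ drawn from $H$, every functional arising in your induction lies in some $m(n)U^\circ$, and the single rescaling $e_n'\mapsto m(n)^{-1}e_n'$, $e_n\mapsto m(n)e_n$ puts the whole sequence into the one equicontinuous set $U^\circ$ while preserving (i), (iii) and (iv). With that reduction inserted, your argument closes; without it, no choice of normalizations can salvage (ii).
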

\begin{proof}
We denote by $\tau$ the metrizable topology of the Fr\'echet space $F$ and by $p$ the continuous norm on $F$. Let $U:=\{ x \in F \ | \ p(x) \leq 1 \}$ be the unit ball of the norm $p$. The polar set $U^{\circ}$ of $U$ in $F'$ coincides with $\{ u \in F' \ | \ |u(x)| \leq p(x) \ {\rm for \ all} \ x \in F \}$ and, by Hahn-Banach theorem,  its linear span $H:= {\rm span}(U^{\circ}) = \cup_{s \in \N} s U^{\circ}$ is $\sigma(F',F)$-dense in $F'$.

Since $(F,\tau)$ is separable, there is a countable, infinite subset $A=(y_n)_{n \in \N_0}$ of $F$ which is $\tau$ dense in $F$. Hence, $A$ is also dense in the normed space $(F,p)$, and this normed space is separable, too. The topological dual of $(F,p)$ coincides with $H = {\rm span}(U^{\circ}) \subset F'$. Since $(F,p)$ is metrizable and separable, it follows from \cite[Corollary 2.5.13]{PCB} that $H$ is $\sigma(H,F)$-separable. We select a countable, infinite subset $A'=(v_n)_{n \in \N_0}$ of $H$ which is dense in $(H, \sigma(H,F))$. We now apply \cite[Proposition 2.3.2]{PCB} to find sequences $(x_n)_{n \in \N_0} \subset {\rm span}(A)$ and $(x'_n)_{n \in \N_0} \subset {\rm span}(A')$ such that $\langle x_n, x'_m \rangle = 0$ if $n \neq m$ and $\langle x_n, x'_n \rangle = 1$ for each $n \in \N_0$, and ${\rm span}(\{x_n, n \in \N_0\}) = {\rm span}(A)$ and ${\rm span}(\{x'_n, n \in \N_0\}) = {\rm span}(A')$.

Since for each $n \in \N_0$  we have $x'_n \in {\rm span}(A') \subset H = \cup_{s \in \N} s U^{\circ}$, then for each $n \in \N_0$ we can select $m(n) > 0$ such that $|x'_n(x)| \leq m(n) p(x)$ for each $x \in F$. We set $e'_n:= m(n)^{-1} x'_n$ and $e_n := m(n) x_n$ for each $n \in \N_0$ and we check that these sequences satisfy properties $(i)-(iv)$.

(i) ${\rm span}(\{e_n, n \in \N_0\}) = {\rm span}(\{x_n, n \in \N_0\}) = {\rm span}(A)$ is dense in $(F,\tau)$.

(ii) $|e'_n(x)| \leq p(x)$ for each $x \in F$ and each $n \in \N_0$. Therefore, $(e'_n)_{n \in \N_0}$ is equicontinuous in $F'$.

(iii) ${\rm span}(\{e'_n, n \in \N_0\}) = {\rm span}(\{x'_n, n \in \N_0\}) = {\rm span}(A')$ is $\sigma(H,F)$-dense in $H$. Since $H$ is $\sigma(F',F)$-dense in $F$, we conclude that ${\rm span}\{e'_n, n \in \N_0\}$ is $\sigma(F',F)$-dense in $F'$.

(iv) This follows easily from the definitions, since $(x_n, x'_n)_{n \in \N_0}$ is a biorthogonal system.
\end{proof}

Lemma \ref{lemma_basic} is a version for separable Fr\'echet spaces with a continuous norm of \cite[Lemma 2]{BP}, which was relevant in linear dynamics. See also \cite[Lemma 2.11]{BaMa}. Observe that the existence of a sequence $(e'_n)_{n \in \N_0} \subset F'$ satisfying (ii) and (iii) in Lemma \ref{lemma_basic} implies that the space $F$ has a continuous norm. In fact, by (ii) there is a seminorm $p$ on $F$ such that $\sup_{n \in \N_0}|e'_n(x)| \leq p(x)$ for all $x \in F$. Suppose that $p(y)=0$ for some $y \in F$. Then $e'_n(y)=0$ for each $n \in \N_0$. Condition (iii) implies that $y=0$.

The statement and proof of our next result are inspired by Mashreghi and Ransford \cite[Theorem 1.3]{MR}.

\begin{theorem}\label{maintheorem}
Let $F$ be a separable, infinite dimensional, complex Fr\'echet space with a continuous norm. Let $G$ be an open connected domain in $\C$. Then there exists a Fr\'echet space $E \subset H(G)$ of holomorphic functions on $G$ such that $F$ is isomorphic to $E$ and the polynomials are dense in $E$.
\end{theorem}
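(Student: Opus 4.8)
The goal is to realize $F$ as a Fréchet space of holomorphic functions on $G$ with polynomials dense. I would start from the biorthogonal system $(e_n,e_n')_{n\in\N_0}$ furnished by Lemma~\ref{lemma_basic}, and use it to build an injective continuous linear map $J\colon F\to H(G)$ whose image, with the transported topology, is the desired space $E$. The natural candidate is a ``generating function'' type map: fix a point $z_0\in G$ (or the origin if $0\in G$; otherwise translate) and a sequence of scalars $(a_n)_{n\in\N_0}$ decaying fast enough, and set
\[
J(x)(z) := \sum_{n=0}^{\infty} a_n\,\langle x,e_n'\rangle\,\varphi_n(z),
\]
where $(\varphi_n)$ is a suitable sequence in $H(G)$. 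For $G=\C$ or $G=\D$ the obvious choice is $\varphi_n(z)=z^n$; for a general open connected $G$ one can instead take $\varphi_n$ to be monomials in a fixed Riemann map or, more robustly, a sequence of polynomials, since the statement only requires $E\subset H(G)$ with polynomials dense in $E$ — not density of polynomials in $H(G)$. The key point is that $(\langle x,e_n'\rangle)_n$ is, by equicontinuity (ii), a bounded sequence relative to one continuous seminorm $p$ on $F$, so choosing $a_n\to 0$ fast forces the series to converge locally uniformly on $G$ and $J$ to be continuous into $H(G)$.

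**Key steps, in order.** (1) Reduce to the case $0\in G$ by a translation, and show $J$ is well-defined and continuous $F\to H(G)$: using $|\langle x,e_n'\rangle|\le p(x)$ from Lemma~\ref{lemma_basic}(ii) and a geometric bound $\sup_{|z|\le r}|\varphi_n(z)|\le M_r^n$ on compact subsets of $G$, pick $a_n$ so that $\sum_n |a_n| M_r^n <\infty$ for every relevant $r$; this makes $J$ continuous. (2) Show $J$ is injective: this is where the biorthogonality and density of $\operatorname{span}\{e_n'\}$ in $(F',\sigma(F',F))$ enter. If $J(x)=0$ in $H(G)$, then by the identity theorem all Taylor-type coefficients vanish; if the $\varphi_n$ are chosen to be ``triangular'' (e.g. $\varphi_n=z^n$, or more generally a sequence whose coefficient functionals separate the index), one deduces $a_n\langle x,e_n'\rangle=0$, hence $\langle x,e_n'\rangle=0$ for all $n$, hence $x=0$ by Lemma~\ref{lemma_basic}(iii). (3) Transport the topology: set $E:=J(F)$ with the topology making $J\colon F\to E$ an isomorphism. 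Then $E$ is a Fréchet space, the inclusion $E\hookrightarrow H(G)$ is continuous (it equals $J\circ J^{-1}$, where $J$ is continuous into $H(G)$), and the point evaluations on $E$ are continuous. (4) Density of polynomials: arrange that each $\varphi_n$ is itself a polynomial (e.g. $\varphi_n=z^n$) and that each $e_n$ maps into $\operatorname{span}\{\varphi_0,\dots,\varphi_n\}$ up to the biorthogonality normalization; since $\operatorname{span}\{e_n\}$ is dense in $F$ by (i) and $J$ is an isomorphism onto $E$, the images $J(e_n)$ are dense in $E$, and each $J(e_n)$ is a polynomial (indeed $J(e_n)=\sum_m a_m\langle e_n,e_m'\rangle\varphi_m = a_n\varphi_n$ by biorthogonality). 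Hence $\operatorname{span}\{\varphi_n\}=\cP$ is dense in $E$, and in particular $E$ contains the polynomials.

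**Main obstacle.** The delicate point is simultaneously guaranteeing (a) convergence/continuity of $J$ into $H(G)$ for \emph{arbitrary} open connected $G$ — not just $\D$ and $\C$ — and (b) injectivity together with $\cP\subset E$ densely. For $G=\D$ or $\C$, $\varphi_n(z)=z^n$ works cleanly because a single geometric decay rate handles all compact sets (for $\D$, all $r<1$; for $\C$, all $r<\infty$ after adjusting $a_n$ to beat $r^n$ for every $r$, which a super-exponentially decaying $(a_n)$ does). For a general $G$ one must exhaust $G$ by an increasing sequence of compact sets $K_j$, note $\sup_{K_j}|z|^n\le R_j^n$ with $R_j\uparrow$, and choose $a_n$ decaying fast enough that $\sum_n|a_n|R_j^n<\infty$ for every $j$ — this is possible by a diagonal choice since we only need countably many summability conditions. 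Injectivity is then immediate from the $\D/\C$ argument applied on any disc inside $G$, because $J(x)$ is a genuine power series $\sum a_n\langle x,e_n'\rangle z^n$. The other subtle check is that the seminorm estimates transported to $E$ actually define the \emph{same} topology that makes the inclusion into $H(G)$ continuous — but this is automatic once $J\colon F\to H(G)$ is continuous and injective, by declaring $E=J(F)$ with the pushed-forward Fréchet topology and invoking the closed graph / continuity of evaluations as in the opening observation of Section~2.
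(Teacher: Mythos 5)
Your proposal is correct and is essentially the paper's own argument: the same map $x \mapsto \sum_n a_n \langle x, e_n'\rangle z^n$ built from the biorthogonal system of Lemma~\ref{lemma_basic}, with coefficients chosen so that $\sum_n a_n k^n < \infty$ for every $k$ (which already makes each image an entire function, so no Riemann map or special treatment of general $G$ is needed), the same injectivity argument via weak$^*$ density of $\mathrm{span}\{e_n'\}$, the same transported topology on $E = J(F)$, and the same density argument from $J(e_n) = a_n z^n$.
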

\begin{proof}
We apply Lemma \ref{lemma_basic} to the space $F$ to select the sequences $(e_n)_{n \in \N_0} \subset F$ and $(e'_n)_{n \in \N_0} \subset F'$ satisfying conditions $(i)-(iv)$. In particular, we find a continuous norm $p$ on $F$ such that $|e'_n(x)| \leq p(x)$ for each $x \in F$ and each $n \in \N_0$. Now take a sequence $(\alpha_n)_{n \in \N_0}$ of positive numbers such that $\sum_{n=0}^{\infty} \alpha_n k^n < \infty$ for each $k \geq 1$. Define $T:F \rightarrow H(G)$ by $T(x)(z):= \sum_{n=0}^{\infty} \alpha_n e'_n(x) z^n$ for each $x \in F$ and each $z \in G$. The operator $T$ is well-defined, linear and continuous. First of all, the series defining $T(x)$ converges and defines an entire function for each $x \in F$, since
$$\sum_{n=0}^{\infty} \alpha_n |e'_n(x)| |z|^n \leq p(x) \sum_{n=0}^{\infty} \alpha_n k^n < \infty$$
for each $|z| \leq k$ and each $k \geq 1$. The continuity of $T$ can be seen as follows: given an arbitrary compact subset $L$ of $G$, there is $k \geq 1$ such that $|z| \leq k$ for each $z \in L$. Then, for each $x \in F$ we get
$$
\sup_{z \in L} |T(x)(z)| = \sup_{z \in L} \big|\sum_{n=0}^{\infty} \alpha_n e'_n(x) z^n \big| \leq \big(\sum_{n=0}^{\infty} \alpha_n k^n \big) p(x).
$$
Moreover, the map $T$ is injective. Indeed, if $T(x)=0$ in $H(G)$, then $e'_n(x)=0$ for each $n \in \N_0$. Since
${\rm span}(\{ e'_n \ | \ n \in \N_0 \})$ is dense in $(F',\sigma(F',F))$, this implies that $x = 0$.

The monomials are clearly contained in $T(F)$, because $T((\alpha_n)^{-1} e_n)=z^n$ for each $n \in \N_0$. Therefore, each polynomial $P(z)= \sum_{n=0}^s a_n z^n = T(\sum_{n=0}^s a_n (\alpha_n)^{-1} e_n)$ belongs also to $T(F)$.

The linear map $T:F \rightarrow T(F) \subset H(G)$ is a continuous bijection. We endow $E:=T(F)$ with the metrizable, complete topology such that $T:F \rightarrow E$ is an isomorphism. Then $E$ is a separable Fr\'echet space of holomorphic functions on the domain $G$. The polynomials are contained in $E$ and they are also dense. This can be seen in the following way. By (i) in Lemma \ref{lemma_basic}, ${\rm span}(\{(\alpha_n)^{-1} e_n, n \in \N_0\}) = {\rm span}(\{ e_n \ | \ n \in \N_0 \})$ is dense in $F$. Then the set $\mathcal{P}$ of all polynomials satisfies $$\overline{\mathcal{P}} = \overline{T({\rm span}(\{(\alpha_n)^{-1} e_n), n \in \N_0\}))}=T(\overline{{\rm span}(\{(\alpha_n)^{-1} e_n, n \in \N_0\})}) = T(F) = E. $$
\end{proof}

A Fr\'echet space $F$ has the \textit{approximation property} if there is a net $(T_{\alpha})_{\alpha}$ of finite rank operators on $F$ such that $T_{\alpha} x$ converges to $x$ for each $x \in F$ uniformly on the compact subsets of $F$. A Fr\'echet space $F$ has the \textit{bounded approximation property} if there is an equicontinuous  net $(T_{\alpha})_{\alpha}$ of finite rank operators on $F$ such that $T_{\alpha} x$ converges to $x$ for each $x \in F$. If $F$ has the bounded approximation property, then it has the approximation property, since pointwise convergence and uniform convergence on compact sets coincide on the equicontinuous subsets of the space $L(F)$ of all continuous linear operators from $F$ into itself; see e.g.\ \cite[Theorem 39.4(2)]{K}.
Assume now that $F$ is also separable. In this case, every equicontinuous subset of the space $L(F)$ is metrizable for the topology of pointwise convergence by \cite[Theorem 39.5(9)]{K}. Therefore, one can apply the uniform boundedness principle to conclude that  a separable Fr\'echet space $F$ has the bounded approximation property if and only if there is a sequence $(T_n)_n$ of finite rank operators such that $\lim_{n \rightarrow \infty} T_n x = x$ for each $x \in F$. Every Fr\'echet space with a Schauder basis has the bounded approximation property and every nuclear Fr\'echet space has the approximation property. The problem of  Grothendieck whether every nuclear Fr\'echet space has the bounded approximation property was open for quite a while. The first counterexample was due to Dubinsky, and simpler examples were obtained by Vogt. We refer the reader to the introduction of Vogt's paper \cite{V10} for more information. In that paper Vogt presented an easy and transparent example of a nuclear Fr\'echet space failing the bounded approximation property and consisting of $C^{\infty}$-functions on a subset of $\R^3$. The first examples of nuclear Fr\'echet spaces with the bounded approximation property without basis are due to Mitiagin and Zobin; see pages 514 and ff. in \cite{J}. An easy example of a nuclear Fr\'echet space which consists of $C^{\infty}$-functions and has no Schauder basis was also given by Vogt in \cite{V05}. Theorem \ref{maintheorem} permits us to obtain in an abstract way examples among nuclear Fr\'echet spaces of holomorphic functions.

\begin{corollary}\label{bap_fn}
There are nuclear Fr\'echet spaces of holomorphic functions on the unit disc or the complex plane without the bounded approximation property, and there are others with the bounded approximation property without Schauder basis.
\end{corollary}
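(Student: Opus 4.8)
The plan is to derive the corollary directly from Theorem \ref{maintheorem}: in each of the two cases it suffices to produce a nuclear Fr\'echet space with a continuous norm enjoying the required property, to transport it into $H(\D)$ (or $H(\C)$) by the theorem, and to observe that nuclearity, the bounded approximation property, and the existence of a Schauder basis are all preserved under isomorphism. Recall also that every nuclear Fr\'echet space is separable, so the separability hypothesis of Theorem \ref{maintheorem} is automatic for such model spaces.

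For the first assertion I would start from the nuclear Fr\'echet space $F_1$ of $C^{\infty}$-functions on a subset of $\R^3$ constructed by Vogt in \cite{V10}, which fails the bounded approximation property. It is infinite dimensional and, being nuclear, separable; and, as a space of (restrictions of) smooth functions, it carries the supremum norm $f \mapsto \sup\{|f(x)| \ : \ x \in K\}$ as a continuous norm. Replacing $F_1$ by its complexification if necessary, we obtain a complex, separable, infinite-dimensional Fr\'echet space with a continuous norm that still fails the bounded approximation property. Applying Theorem \ref{maintheorem} to $F_1$, once with $G=\D$ and once with $G=\C$, yields a Fr\'echet space $E_1 \subset H(G)$ of holomorphic functions in which the polynomials are dense and with $E_1$ isomorphic to $F_1$; hence $E_1$ is a nuclear Fr\'echet space of holomorphic functions on the disc (respectively the plane) without the bounded approximation property.

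For the second assertion I would feed the same machine with a Mitiagin--Zobin space $F_2$ (see \cite[pages 514 and ff.]{J}): a nuclear Fr\'echet space with the bounded approximation property and no Schauder basis. Again $F_2$ is infinite dimensional and, by nuclearity, separable, and it may be taken with a continuous norm (it can be realised as a subspace of the nuclear Fr\'echet space $s$ of rapidly decreasing sequences, whose norms restrict to continuous norms on $F_2$; an example of Vogt as in \cite{V05} could be used instead). Theorem \ref{maintheorem} then produces $E_2 \subset H(G)$ isomorphic to $F_2$, with dense polynomials, and $E_2$ is a nuclear Fr\'echet space of holomorphic functions that has the bounded approximation property but no Schauder basis.

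The routine parts — separability (automatic from nuclearity), infinite-dimensionality, the passage to complex scalars, and the transfer of the three isomorphism invariants across the isomorphism $F\cong E$ supplied by Theorem \ref{maintheorem} — present no difficulty. The one point requiring care is the continuous-norm hypothesis: by the observation at the beginning of this section, a Fr\'echet space without a continuous norm cannot be isomorphic to any Fr\'echet space of holomorphic functions at all, so the argument genuinely needs the classical counterexamples to be chosen (or re-realised) with a continuous norm — as spaces of smooth functions equipped with the supremum norm, or as subspaces of $s$ — and this is the only place where a small verification is needed.
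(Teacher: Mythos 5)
Your proposal is correct and is essentially the argument the paper intends: the corollary is presented as an immediate consequence of Theorem \ref{maintheorem} applied to Vogt's nuclear Fr\'echet space of $C^{\infty}$-functions failing the bounded approximation property and to the Mitiagin--Zobin examples, exactly as you do, with the only substantive point being that these model spaces carry continuous norms. One small caveat: your parenthetical justification that the Mitiagin--Zobin space can be realised as a subspace of $s$ is not automatic (for nuclear Fr\'echet spaces that requires property (DN), which is strictly stronger than having a continuous norm), but the examples do admit continuous norms, which is all the theorem needs.
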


It is also a consequence of Theorem \ref{maintheorem} that classical counterexamples in the theory of Fr\'echet spaces exist in the frame of separable Fr\'echet spaces of holomorphic functions on an open connected domain. For example there are Fr\'echet Montel spaces which are not Schwartz, Fr\'echet Schwartz spaces without approximation property, non-distinguished Fr\'echet spaces and Fr\'echet spaces with a continuous norm such that their bidual is isomorphic to a countable product of Banach spaces, among many others. See more information about these examples  in \cite{BB} and \cite{MV}.

\vspace{.3cm}

\textbf{Acknowledgement.} This research was partially supported by the projects  MTM2016-76647-P and GV Prometeo/2017/102.



\noindent \textbf{Author's address:}%
\vspace{\baselineskip}%

Jos\'e Bonet: Instituto Universitario de Matem\'{a}tica Pura y Aplicada IUMPA,
Universitat Polit\`{e}cnica de Val\`{e}ncia,  E-46071 Valencia, Spain

email: jbonet@mat.upv.es \\

\end{document}